\newcommand{\Adm}{{\mathrm{Adm}}}
\newcommand{\dd}{{\mathrm{d}}}
\DeclareMathOperator{\Real}{Re}
\numberwithin{equation}{section}
\begin{document}

\title[Classical solutions for HJB equations]{On classical solutions and canonical transformations for Hamilton--Jacobi--Bellman equations}

\author[M. Bansil]{Mohit Bansil}
\address{Department of Mathematics, University of California, Los Angeles, CA 90095-1555, USA}
\email{mbansil@math.ucla.edu} 

\author[A.R. M\'esz\'aros]{Alp\'ar R. M\'esz\'aros}  
\date{\today}
\address{Department of Mathematical Sciences, University of Durham, Durham DH1 3LE, UK}
\email{alpar.r.meszaros@durham.ac.uk} 

\begin{abstract}
In this note we show how canonical transformations reveal hidden convexity properties for deterministic optimal control problems, which in turn result in global existence of $C^{1,1}_{loc}$ solutions to first order Hamilton--Jacobi--Bellman equations.
\end{abstract}

\maketitle

\section{Introduction}

For given data $H:\R^d\times\R^d\to\R$ and $G:\R^d\to\R$ and time horizon $T>0$ let us consider the following Cauchy problem associated with the Hamilton--Jacobi--Bellman (HJB) equation
\begin{equation}\label{eq:HJ}
\left\{
\begin{array}{ll}
	\partial_t u(t,x) + H(x,\pa_x u) = 0, & (t,x)\in (0,T)\times\R^d, \\ 
	u(T, x) = G(x), & x\in\R^d.
\end{array}
\right.
\end{equation}

For convenience we will assume that $H \in C^2(\R^d\times\R^d)$ and $G\in C^2(\R^d)$, and the second derivatives of these functions are uniformly bounded. Suppose that $H$ is convex in its second variable, so that this can be seen as the Legendre--Fenchel transform of a Lagrangian function, i.e. we have 
\begin{equation}\label{def:H}
H(x,p)=\sup_{v\in\R^d}\{p\cdot v - L(x,-v)\},
\end{equation}
for some $L\in C^2(\R^d\times\R^d)$ given. For convenience, we also assume that $L$ is convex in its second variable. In this case \eqref{eq:HJ} corresponds to a variational problem. Indeed, it is well-known that under suitable assumptions we have that the value function
\begin{equation}\label{eq:control}
u(t,x):=\inf_{\gamma:[t,T] \to \R^d : \gamma(t) = x} \int_t^T L(\gamma(s), \dot \gamma(s)) \dd s + G(\gamma(T))
\end{equation}
is the unique viscosity solution to \eqref{eq:HJ}. This solution is locally Lipschitz continuous and locally semi-concave with a linear modulus of continuity (cf. \cite[Theorem 7.4.12, Theorem 7.4.14]{Cannarsa04}).

\medskip

It is well-known, however, that  the unique viscosity solution $u$ {in general} develops singularities in finite time, even if $H$ and $G$ are smooth. Examples of finite time singularity formations can be constructed in a relatively straightforward way, see for instance \cite[Example 1.3.4, Example 6.3.5]{Cannarsa04} or \cite[Appendix B.4]{TwoAuthor2022}. More particularly, one can show that for instance for purely quadratic Hamiltonians $H(x,p)=\frac12|p|^{2}$ a singularity of $u$ must form in finite time, for any {\it non-convex} (and non-constant) $G$. This fact is documented in \cite[Theorem 3.1]{GraMes22b}.

{Fine properties of sets of singularities of viscosity solutions have been studied extensively in the literature. Probably the first results on this topic were obtained in \cite{CanSon:87}. For a non-exhaustive list of further works dealing with singularities of solutions we refer the reader to \cite{CanSon:89, AlbCan:99, AlbCan:99, Yu, CanMenSin, CanYu, CanFra:91, CanFra:14} and to the review paper \cite{CanChe}.} Singularity formation is equivalent to the non-uniqueness of optimal trajectories in the variational problem \eqref{eq:control} (cf. \cite{Cannarsa04}).

Therefore, if one is able to ensure uniqueness of optimizers in \eqref{eq:control}, this results in the differentiability of the value function, hence in the existence of {a unique} classical solution to \eqref{eq:HJ}. This is precisely the case if $L$ is jointly convex and $G$ is convex, when the dynamics in the control problem is linear. This implies that $u(t,\cdot)$ inherits the convexity and thus it becomes a $C^{1,1}_{loc}$ classical solution for arbitrary time horizon. This fact and related properties are classical and well documented in the literature, see for instance \cite[Corollary 7.2.12]{Cannarsa04} and \cite{BarEva, GoeRoc, Goe:05-1,Goe:05-2, Roc:70-1, Roc:70-2}.

\medskip

As evidenced by the aforementioned works, the global existence of classical solutions is more of an exception than the rule in the theory of HJB equations.
To the best of our knowledge, beyond the fully convex regime described above, there are no alternative sufficient conditions on the data $(H,G)$ (or $(L,G)$) which would result in global in time classical well-posedness theory for \eqref{eq:HJ} in the class $C^{1,1}_{loc}$. Both of the concrete situations of singularity formations mentioned above (\cite[Appendix B.4]{TwoAuthor2022} and \cite[Theorem 3.1]{GraMes22b}) show precisely how semi-convexity estimates fail in general in finite time, hence this is a particular source of the failure of the global in time well-posedness in the class $C^{1,1}_{loc}$.

\medskip
 
In this manuscript we show that a special class of linear {\it canonical transformations} can reveal new global well-posedness theories. Let us describe the philosophy behind our approach. Let $\alpha\in\R$ be given. Then the transformation
$$
\R^{d}\times\R^{d}\ni(x,p)\mapsto (x,p-\alpha x) 
$$
is a so-called canonical transformation on the phase space, which preserves the structure of Hamilton's equations. Such transformations are well-known in classical mechanics (cf. \cite{Arnold1989}). We will make use of the following definitions.
\begin{equation}\label{def:H_alpha}
H_\alpha(x,p):=H(x,p-\alpha x)
\end{equation}
and 
\begin{equation}\label{def:G_alpha}
G_{\alpha}(x):=G(x)+\frac{\alpha}{2}|x|^{2}.
\end{equation}

Because of the nature of this transformation, we can state the first result of the paper.
\begin{thm}\label{prop:trans_HJ}
Let $\alpha\in\R$. Then $u$ is a classical solution to \eqref{eq:HJ} with data $(H,G)$ in $(0,T)\times\R^d$, if and only if $u_\alpha:(0,T)\times\R^{d}$, defined as 
$$
u_\alpha(t,x) := u(t,x) + \frac{\alpha}{2}|x|^2,
$$ 
is a classical solution to \eqref{eq:HJ} on $(0,T)\times\R^d$ with data $(H_\alpha,G_\alpha).$
\end{thm}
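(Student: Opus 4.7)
The plan is to verify the equivalence by a direct substitution, since the claim is purely algebraic/differential in nature and does not require any PDE machinery beyond the chain rule. The map $u \mapsto u_\alpha$ is an invertible affine change of the unknown, so it suffices to check that the PDE and the terminal condition are each preserved under this change, in both directions.

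First, I would fix a classical solution $u \in C^{1}((0,T)\times \R^d)$ of \eqref{eq:HJ} with data $(H,G)$, define $u_\alpha(t,x) := u(t,x) + \frac{\alpha}{2}|x|^2$, and compute the derivatives: $\pa_t u_\alpha = \pa_t u$ and $\pa_x u_\alpha = \pa_x u + \alpha x$, equivalently $\pa_x u = \pa_x u_\alpha - \alpha x$. Substituting into the HJB equation satisfied by $u$ and using the definition \eqref{def:H_alpha},
\begin{equation*}
\pa_t u_\alpha(t,x) + H_\alpha(x, \pa_x u_\alpha(t,x)) = \pa_t u(t,x) + H(x, \pa_x u_\alpha(t,x) - \alpha x) = \pa_t u(t,x) + H(x, \pa_x u(t,x)) = 0,
\end{equation*}
so $u_\alpha$ solves the HJB equation with Hamiltonian $H_\alpha$ on $(0,T)\times\R^d$. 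At $t=T$, $u_\alpha(T,x) = G(x) + \frac{\alpha}{2}|x|^2 = G_\alpha(x)$ by \eqref{def:G_alpha}, which gives the required terminal condition.

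For the converse, I would simply observe that the transformation $u \mapsto u_\alpha$ is invertible: given a classical solution $v$ of \eqref{eq:HJ} with data $(H_\alpha, G_\alpha)$, set $u(t,x) := v(t,x) - \frac{\alpha}{2}|x|^2$ and repeat the same computation with $\alpha$ replaced by $-\alpha$, noting that $(H_\alpha)_{-\alpha} = H$ and $(G_\alpha)_{-\alpha} = G$. This shows $u$ is a classical solution with data $(H,G)$, and $v = u_\alpha$.

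There is essentially no obstacle: the argument is a one-line chain-rule verification, and the main thing to be careful about is just keeping the signs in the definition \eqref{def:H_alpha} consistent with the shift $\pa_x u = \pa_x u_\alpha - \alpha x$. The statement is intentionally phrased at the level of classical solutions so that no viscosity-solution comparison or regularity theory enters the proof — differentiability of $u$ (and hence of $u_\alpha$) is assumed, and the equivalence is preserved pointwise.
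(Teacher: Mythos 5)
Your proposal is correct and coincides with the paper's own argument, which likewise verifies the claim by the direct computation $\pa_t u_\alpha = \pa_t u$, $\pa_x u_\alpha = \pa_x u + \alpha x$, substitution into the equation via \eqref{def:H_alpha}, and the check $u_\alpha(T,\cdot)=G_\alpha$ (the paper also notes it follows from the representation formula \eqref{eq:value_transform}). Your explicit treatment of the converse via $(H_\alpha)_{-\alpha}=H$ and $(G_\alpha)_{-\alpha}=G$ is a slightly more careful rendering of the same symmetry the paper leaves implicit.
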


This theorem has two immediate consequences. First, if we have a global well-posedness theory for \eqref{eq:HJ} in the class $C^{1,1}_{loc}$ with data $(H,G)$, we obtain a whole one-parameter family of global well-posedness theories in $C^{1,1}_{loc}$ with data $(H_{\alpha},G_{\alpha})_{\alpha\in\R}$. Second, if we are able to find one real number $\alpha\in\R$ such that \eqref{eq:HJ} is globally well-posed for the data $(H_{\alpha},G_{\alpha})$, then the original problem with data $(H,G)$ must also be globally well-posed.

It turns out that this second consequence will be the one revealing genuine new global in time well-posedness theories for \eqref{eq:HJ} in the class $C^{1,1}_{loc}$. Therefore, as our second main result (formulated in Theorem \ref{thm:HJE_wellposed} below), we have identified sufficient conditions on $(H,G)$ which imply that for some precise $\alpha\in\R$, the transformed data $(H_{\alpha},G_{\alpha})$ (or the corresponding $(L_{\alpha},G_{\alpha})$) fall into the well-known fully convex regime, and therefore this gives the global well-posedness of \eqref{eq:HJ} in $C^{1,1}_{loc}$ with the original data $(H,G)$. A direct corollary of our main results can be summarized as follows.

\begin{cor}\label{cor:main} Let $H:\R^d\times\R^d\to\R$ and $G:\R^d\to \R$ be $C^2$ functions with uniformly bounded second order derivatives. Suppose furthermore that $H(x,\cdot)$ is strongly convex, uniformly in $x$.

Then, we have the following.
\begin{enumerate}
\item There exist a constant $C>0$ depending only on $\norm{D^2 G}_{\infty}$, $\norm{D^2 H}_{\infty}$, and the lower bound on $\pa_{pp} H$ such that \eqref{eq:HJ} with data $(\tilde H,G)$, where 
$$\tilde H(x,p) := H(x,p) + \alpha x\cdot p,$$ 
is globally well-posed in the class $C^{1,1}_{\rm{loc}}([0,T]\times\R^d)$, for any $T>0$, whenever $\alpha > C$.
\item There exist a constant $C>0$ depending only on $\norm{D^2 G}_{\infty}$, $\norm{D^2 H}_{\infty}$, and the lower bound on $\pa_{pp} H$ such that \eqref{eq:HJ} with data $(\tilde H, G)$, where
$$
\tilde H(x,p) := H(x,p) - \alpha \frac{|x|^2}{2},
$$ 
is globally well-posed in the class $C^{1,1}_{\rm{loc}}([0,T]\times\R^d)$, for any $T>0$, whenever $\alpha > C$.
\end{enumerate} 
\end{cor}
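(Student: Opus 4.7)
The plan is to deduce both parts by combining Theorem~\ref{prop:trans_HJ} with the classical global-in-time $C^{1,1}_{\mathrm{loc}}$ well-posedness for HJB equations with jointly convex Lagrangian and convex terminal datum. For each part I fix a canonical shift $\beta\in\R$ (distinct from $\alpha$) and apply Theorem~\ref{prop:trans_HJ} to the data $(\tilde H,G)$, producing transformed data $(\tilde H_\beta,\tilde G_\beta)$ with $\tilde G_\beta(x) := G(x)+\frac{\beta}{2}|x|^2$. The choice $\beta := \norm{D^2 G}_\infty + 1$ makes $\tilde G_\beta$ convex, so the task reduces to verifying that the associated Lagrangian $\tilde L_\beta$ is jointly convex on $\R^d \times \R^d$ once $\alpha$ exceeds a constant depending only on the three quantities in the statement. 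The fully convex theory then gives a $C^{1,1}_{\mathrm{loc}}$ solution for $(\tilde H_\beta,\tilde G_\beta)$, and Theorem~\ref{prop:trans_HJ} transfers it back to $(\tilde H,G)$.

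For part (2), Fenchel duality in $p$ yields $\tilde L(x,v) = L(x,v) + \frac{\alpha}{2}|x|^2$ and hence $\tilde L_\beta(x,v) = L(x,v) + \frac{\alpha}{2}|x|^2 - \beta\,x\cdot v$. Strong convexity of $H(x,\cdot)$ together with $\norm{D^2 H}_\infty < \infty$ furnish two-sided uniform bounds on $L_{vv}$ (hence on $L_{vv}^{-1}$) and uniform bounds on $L_{xx}$, $L_{xv}$. The Schur complement of the Hessian of $\tilde L_\beta$ with respect to the invertible block $L_{vv}$ equals
\[
L_{xx} + \alpha I - (L_{xv} - \beta I)\,L_{vv}^{-1}\,(L_{vx} - \beta I),
\]
whose second summand is bounded in operator norm by a constant depending only on $\beta$ and on the data quantities listed; thus for $\alpha$ above that constant the full Hessian is positive semi-definite.

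Part (1) is subtler because $\tilde L(x,v) = L(x, v+\alpha x)$ depends on $\alpha$ through the velocity argument, so the second derivatives of $\tilde L_\beta(x,v) = L(x, v+\alpha x) - \beta\,x\cdot v$ must be computed by the chain rule with all partials of $L$ evaluated at $(x, v+\alpha x)$. Expanding the Schur complement, the $\alpha^2 L_{vv}$ contribution in the $xx$-block cancels exactly against the $\alpha^2 L_{vv}$ term coming from $(L_{xv} + \alpha L_{vv})\,L_{vv}^{-1}\,(L_{vx} + \alpha L_{vv})$, as do the $\alpha L_{xv}$ and $\alpha L_{vx}$ contributions, leaving
\[
L_{xx} - L_{xv}L_{vv}^{-1}L_{vx} + 2\alpha\beta\,I + \beta\bigl(L_{xv}L_{vv}^{-1} + L_{vv}^{-1}L_{vx}\bigr) - \beta^2 L_{vv}^{-1}.
\]
With $\beta>0$ fixed as above, the linear-in-$\alpha$ term $2\alpha\beta I$ dominates the remaining bounded expression once $\alpha$ exceeds a constant depending only on the listed quantities. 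I expect this exact $\alpha^2$-cancellation to be the main technical point: without it the leading behaviour would be a negative multiple of $\alpha^2$, preventing the argument from closing for large $\alpha$. Once the algebra is confirmed, both parts conclude via Theorem~\ref{prop:trans_HJ} and the fully convex well-posedness theory.
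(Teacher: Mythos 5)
Your argument is correct, but it reaches the corollary by a genuinely different route than the paper. The paper deduces Corollary \ref{cor:main} in one line from Theorem \ref{thm:HJE_wellposed}, which works entirely on the Hamiltonian side: one checks that for $\alpha$ large the quantities $\lambda_0=\inf\lambda_{\min}(\Real \pa_{xp}\tilde H)$, $\lambda_{\tilde H}=\sup\lambda_{\max}(\pa_{xx}\tilde H)$ satisfy the discriminant condition \eqref{ass:disc} (for part (1), $\Real\pa_{xp}\tilde H=\Real\pa_{xp}H+\alpha I$ makes $\lambda_0$ large and positive; for part (2), $\pa_{xx}\tilde H=\pa_{xx}H-\alpha I$ makes $\lambda_{\tilde H}\le 0$), and then a shift $\beta$ chosen by the quadratic formula in Lemma \ref{lem:conc_conv} renders $\tilde H_\beta$ concave in $x$, convex in $p$, with $G_\beta$ convex, so that Lemma \ref{lem:conc_conv0} and the fully convex theory apply. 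You instead fix $\beta=\norm{D^2G}_\infty+1$ up front to convexify $G$ and verify joint convexity of the transformed Lagrangian $\tilde L_\beta$ directly via the Schur complement with respect to $L_{vv}$; your identification of $\tilde L$ ($L(x,v)+\tfrac{\alpha}{2}|x|^2$ in part (2), $L(x,v+\alpha x)$ in part (1)), the exact cancellation of the $\alpha^2 L_{vv}$ and $\alpha L_{xv}+\alpha L_{vx}$ terms in part (1), and the surviving $2\alpha\beta I$ term are all correct, and the required uniform bounds on $L_{xx}$, $L_{xv}$, $L_{vv}^{\pm 1}$ do follow from $\norm{D^2H}_\infty<\infty$ and the uniform strong convexity of $H(x,\cdot)$ via the standard Legendre duality identities. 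What each approach buys: yours is self-contained and makes the dependence of $C$ on the three stated quantities completely explicit through elementary matrix algebra, at the cost of passing through the Lagrangian and justifying the duality bounds on $D^2L$; the paper's route stays on the Hamiltonian side, reuses the sharper non-asymptotic criterion of Theorem \ref{thm:HJE_wellposed} (which is of independent interest), and avoids computing $\tilde L$ altogether. A minor point worth making explicit in your write-up is that for part (1) the second derivatives of $L$ are evaluated at the shifted argument $(x,v+\alpha x)$, which is harmless only because the bounds you invoke are uniform over all of $\R^d\times\R^d$.
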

	
\begin{rmk}
We see that suitably modifying the existing data of a Hamilton--Jacobi--Bellman equation can `convexify' the problem, and in turn this leads to a global in time classical well-posedness theory. Corollary \ref{cor:main} shows that this procedure can be done not only by {adding the term $(x,p)\mapsto-\alpha\frac{|x|^2}{2}$ to the Hamiltonian, but also by adding $(x,p)\mapsto \alpha x\cdot p$ to $H$}, for a suitably chosen $\alpha$.
\end{rmk}	

\begin{rmk}
The attentive reader will notice that we are only using `upper triangular' canonical transformations, i.e. transformations of the form $(x,p)\mapsto (x,p-\alpha x)$. The reason for this is that in order for a system of ODEs {(in the $(X_{s},P_{s})$ unknowns)} to be the characteristic equations of a HJB equation not only do they need to have a Hamiltonian system structure but the boundary conditions must be of a particular form. Specifically to preserve the structure of boundary condition $X_0 = x_0$ we need to have the transformation to be upper triangular. From here one could imagine taking a transform $(x,p)\mapsto (x,p-A x)$ for some {constant} matrix $A$. In order to preserve the structure of the condition ${P_T} = \nabla G(X_T)$ (specifically that the right-hand side is the gradient of a function) we need that $A$ is symmetric. The choice of $A = \alpha I$ is taken for simplicity and the same arguments should work with any symmetric matrix $A$.
\end{rmk}

\begin{rmk}
Although we illustrate this canonical transformation technique to obtain a classical {global} well-posedness theory for the HJB equation, the approach works exactly the same for any other {features} of HJB equations. For example, if one had a result on the structure of the shocks for a HJB equation with data $(H,G)$ (e.g. for instance, the points of non-differentiability lie on a smooth curve) then the same result would hold for the transformed data $(H_{\alpha},G_{\alpha})$ (in fact the non-differentiability points would be the exact same).
\end{rmk}

We finish this introduction with some concluding remarks.
\begin{itemize}
\item In this manuscript, for the simplicity of the exposition, we choose to consider only a simple class of linear canonical transformations of the form $\R^{d}\times\R^{d}\ni(x,p)\mapsto (x,p-\alpha x)$. However, our approach would potentially work for a class of more general nonlinear transformations of the form
$$
\R^{d}\times\R^{d}\ni(x,p)\mapsto (x,p-\nabla\varphi(x)),
$$
for suitable potential functions $\varphi:\R^{d}\to\R$. While these extensions would not pose conceptual difficulties in our analysis, they would introduce heavy technical computations. Thus, we decided not to pursue these in this manuscript.
\item Our approach works using only a Hamiltonian perspective, therefore, in particular by working purely with Hamiltonian systems, we believe that similar results could be proven in the case of Hamiltonians which are not necessarily convex in the momentum variable. Again, for simplicity of the exposition, we do not pursue this direction here.
\item Canonical transformations are well understood in the case of more general Hamiltonian systems on symplectic manifolds, as these are symplectomorphisms on the cotangent bundle (cf. \cite{Arnold1989}). Although we consider only a simple Euclidean setting here, we suspect that our ideas could be potentially useful to study solutions of Hamilton--Jacobi--Bellman equations in more general geometric frameworks as well. Such studies would fall outside of the scope of the present manuscript.
\item It turns out that the canonical transformations considered in this manuscript reveal new deep well-posedness theories in a particular infinite dimensional setting, namely for the master equation in Mean Field Games. These results are detailed in our companion paper \cite{BanMes:master}.
\end{itemize}

\medskip

The rest of the paper contains two short sections. In Section \ref{sec:lag}, for pedagogical reasons, we detail the role of the specific canonical transformations from the Lagrangian perspective. Section \ref{sec:ham} contains our main results, and this is written purely from the Hamiltonian perspective.

\section{Canonical transformations and classical solutions from the Lagrangian perspective}\label{sec:lag}

For $t\in (0,T)$, consider the functional $\mathcal F_t : C^1((t,T);\R^d)\to\R$ defined as  
\[
\mathcal F_t(\gamma):= \int_t^T L(\gamma(x), \dot \gamma(s)) \dd s + G(\gamma(T)).
\]

Furthermore, we define the set of admissible curves as 
$$
\Adm_{t,x}:=\{\gamma\in C^1((t,T);\R^d): \gamma(t)=x\}.
$$
Using this functional, one has
$$
u(t,x):= \inf_{\gamma\in \Adm_{t,x}}\mathcal{F}_t(\gamma).
$$
Differentiability of solutions to \eqref{eq:HJ} is deeply linked to the uniqueness of minimizers in the optimal control problem \eqref{eq:control} (see for instance the discussions in \cite[Example 1.3.4, Example 6.3.5]{Cannarsa04} or \cite[Appendix B.4]{TwoAuthor2022}).
In particular, it is well-known that the convexity of the functional $\gamma\mapsto\mathcal{F}(\gamma)$ would imply that $u(t,\cdot)$ is convex, which in turn would further implies that $u$ is a classical solution to \eqref{eq:HJ} in the class $C^{1,1}_{\rm{loc}}([0,T]\times\R^d)$ (see \cite[Theorem 7.4.13]{Cannarsa04}). The convexity of $\gamma\mapsto\mathcal{F}(\gamma)$ can be guaranteed by the joint convexity of $L$ and the convexity of $G$.

However since the endpoint in the optimization problem, i.e. $\gamma(t)=x$, was fixed we could have just as well considered the functional 
\[
{\mathcal F}_{t,\alpha}(\gamma):= \mathcal F_t(\gamma) + \frac{\alpha}{2} |x|^2 =  \mathcal F_t(\gamma) + \frac{\alpha}{2} {|\gamma(t)|}^2,
\] 
for any $\alpha\in\R$. In this case, one would simply have
\begin{align}\label{eq:value_transform}
u(t,x)+\frac{\alpha}{2} |x|^2:= \inf_{\gamma\in\Adm_{t,x}}{\mathcal{F}}_{t,\alpha}(\gamma).
\end{align}

Following a standard idea in classical mechanics we rewrite this new term as the integral of its time derivative and an initial term to get
\[
{\mathcal F}_{t,\alpha}(\gamma)
&=  \int_t^T L(\gamma(x), \dot \gamma(s)) - \frac{\dd}{\dd s}\( \frac{\alpha}{2} {|\gamma(s)|}^2\) \dd s + \frac{\alpha}{2} {|\gamma(T)|}^2 + G(\gamma(T)) \\
&= \int_t^T L(\gamma(x), \dot \gamma(s)) - \alpha \inn{\gamma(s)}{\dot \gamma(s)} \dd s  + G(\gamma(T))+ \frac{\alpha}{2} {|\gamma(T)|}^2
\]
Notice now that it is possible to not have convexity of 
$$\gamma\mapsto\mathcal F_t(\gamma),$$
but have convexity of 
$$\gamma\mapsto {\mathcal F}_{t,\alpha}(\gamma),$$
 for some $\alpha\in\R$, even though 
$$\inf_{\gamma\in\Adm_{t,x}} \mathcal F_t(\gamma)\ \ {\rm and}\ \  \inf_{\gamma\in\Adm_{t,x}} {\mathcal F}_{t,\alpha}(\gamma)$$ 
are the same problems, in that the optimal values differ by the constant $\frac{\alpha}{2}|x|^2$ and in particular they have the same minimizers.

Therefore, it turns out that such transformations could reveal hidden convexity structures on the data, which were not straightforward in the original setting of the problem, and in particular \eqref{eq:HJ} would be well-posed in the class $C^{1,1}_{\rm{loc}}([0,T]\times\R^d)$, if $\gamma\mapsto{\mathcal F}_{t,\alpha}$ is convex even if we did not have convexity of $\gamma\mapsto{\mathcal F}_t$.

We also have the opposite situation, i.e. when $\gamma\mapsto\mathcal{F}_t$ is convex, yet $\gamma\mapsto{\mathcal{F}}_{t,\alpha}$ is not convex. As the convexity properties of the functionals can be characterized by the convexity of the Lagrangian and final data, it is natural to define the following quantities. For $\alpha\in\R$, let $L_\alpha:\R^d\times\R^d\to\R$ be defined as 
$$L_\alpha(x,v):=L(x,v) - \alpha x\cdot v$$ 
and $G_\alpha:\R^d\to\R$, defined as 
\begin{equation}\label{def:G_alpha}
G_\alpha(x):=G(x)+\frac{\alpha}{2}|x|^2.
\end{equation}

Based on the previous discussion, we can formulate the following proposition.

\begin{prop} We have the following.
\begin{enumerate}
\item[(i)] Let $G:\R^d\to\R$ be convex and let $L:\R^d\times\R^d\to\R$ be jointly convex, and suppose furthermore that both $G$ and $L$ have bounded second derivatives. Then, there exists $\alpha_0\in\R$ such that $L_\alpha$ is not jointly convex and $G_\alpha$ is not convex for any $\alpha<\alpha_0$.
\item[(ii)] There exist $L:\R^d\times\R^d\to\R$ not jointly convex and $G:\R^d\to\R$ non-convex, such that there for a suitable $\alpha\in\R$, $L_\alpha$ becomes jointly convex and $G_\alpha$ becomes convex.
\end{enumerate}
\end{prop}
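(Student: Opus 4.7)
The plan is to analyze the Hessians of the transformed functions directly, since joint convexity of $L_\alpha$ and convexity of $G_\alpha$ are pointwise Hessian conditions. A direct computation yields
\[
D^2 G_\alpha(x) = D^2 G(x) + \alpha I, \qquad D^2 L_\alpha(x,v) = D^2 L(x,v) - \alpha J,
\]
where $J \in \R^{2d \times 2d}$ denotes the block matrix with zero $d \times d$ diagonal blocks and identity $d \times d$ off-diagonal blocks. Note that $J$ has eigenvalues $\pm 1$, with $(e,-e)$ an eigenvector for the eigenvalue $-1$ for any unit $e \in \R^d$.

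For part (i), I would let $M > 0$ be a common uniform upper bound for $\|D^2 G\|_\infty$ and $\|D^2 L\|_\infty$. Since $G$ is convex, the eigenvalues of $D^2 G(x)$ lie in $[0,M]$, so those of $D^2 G_\alpha(x)$ lie in $[\alpha,\alpha+M]$; in particular for $\alpha < -M$ the Hessian $D^2 G_\alpha$ is strictly negative definite, hence $G_\alpha$ is not convex. For $L_\alpha$, I would test against the vector $w = (e,-e) \in \R^{2d}$: one has $w^\top J w = -2$, while $|w^\top D^2 L(x,v) w| \leq M|w|^2 = 2M$, giving
\[
w^\top D^2 L_\alpha(x,v) w \leq 2M + 2\alpha,
\]
which is negative for $\alpha < -M$. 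Taking $\alpha_0 := -M$ then destroys convexity of both $G_\alpha$ and $L_\alpha$ whenever $\alpha < \alpha_0$.

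For part (ii), I would exhibit an explicit one-dimensional example. Fix any $\alpha > 1$ and set
\[
L(x,v) := \tfrac{1}{2} x^2 + \tfrac{1}{2} v^2 + \alpha\, xv, \qquad G(x) := \tfrac{1-\alpha}{2}\, x^2.
\]
Since $\alpha > 1$, $G$ is strictly concave, and the $2 \times 2$ Hessian of $L$ equals $\left(\begin{smallmatrix} 1 & \alpha \\ \alpha & 1 \end{smallmatrix}\right)$ with eigenvalues $1 \pm \alpha$, one of which is negative; hence $L$ is not jointly convex. On the other hand, direct substitution gives $L_\alpha(x,v) = \tfrac12 x^2 + \tfrac12 v^2$ and $G_\alpha(x) = \tfrac12 x^2$, both convex.

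I do not anticipate any substantive obstacle: part (i) is essentially a one-line linear-algebra observation made possible by the uniform Hessian bound, and part (ii) is verified by direct substitution. The only bookkeeping point is to pick a single threshold $\alpha_0$ defeating both the positivity of $D^2 G_\alpha$ and the joint positivity of $D^2 L_\alpha$ simultaneously, for which the common bound $\alpha_0 = -\max(\|D^2 G\|_\infty, \|D^2 L\|_\infty)$ suffices.
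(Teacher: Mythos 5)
Your proof is correct and follows essentially the same route as the paper: part (i) is the paper's argument verbatim (testing $D^2L_\alpha$ against the $-1$-eigenvector $(e,-e)$ of the off-diagonal block matrix and shifting the spectrum of $D^2G$ by $\alpha$), and your explicit quadratic example in part (ii) is just a concrete instance of the paper's observation that applying the transformation with parameter $-\alpha$ to the non-convex pair $(L_\alpha, G_\alpha)$ from part (i) recovers a convex pair.
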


\begin{proof} 
(i) Let $f:\R^d\times\R^d\to \R$ defined as $f(v,x) = v\cdot x$. Direct computation yields that $-1$ is an eigenvalue of $D^2f$ (the eigenvector is the $(1, \dots, 1, -1, \dots, -1)$). Indeed, we see that $D^{2}f(x,y) = \left[
\begin{array}{cc}
0_{d} & I_{d}\\
I_{d} & 0_{d}
\end{array}
\right],$
where $0_{d}$ and $I_{d}$ stand for the zero matrix and the identity matrix in $\R^{d\times d}$, respectively. From here the claim follows.
Hence for $\alpha < \hat{\alpha_0} := -\norm{D^2 L}_{\infty}$ we have that $L_\alpha$ is not jointly convex. Now let $\alpha_0:=\min\{\hat\alpha_0,-\|\partial_{xx}G\|_{L^\infty}\}$, and then the result follows.

\medskip

(ii) In the previous point we have constructed $L_\alpha$ and $G_\alpha$ that are non-convex, but $L$ and $G$ were convex. Now if we apply the same transformation on these new functions with constant $-\alpha$, i.e. $(L_{\alpha})_{-\alpha}$ and $(G_{\alpha})_{-\alpha}$ we get back to the original functions which were convex. The statement follows. 
\end{proof}

The transformations on $L$, as describe above, translate naturally to the Hamiltonian $H$. Indeed, we can see that $H_\alpha$ corresponding to $L_\alpha$ is defined as in \ref{def:H_alpha}.

It is important to notice that the previous transformation preserves the Hamiltonian structure and the HJB equation. This is what leads precisely to Theorem \ref{prop:trans_HJ}, whose proof is straightforward and we present it below.

\begin{proof}[Proof of Theorem \ref{prop:trans_HJ}]
This result readily follows from the representation formula \eqref{eq:value_transform}. Alternatively, direct computation yields
$$
\pa_t u_\alpha(t,x) = \pa_t u(t,x)\ \ {\rm{and}}\ \ \pa_x u_\alpha(t,x) = \pa_x u (t,x) + \alpha x,
$$
and so
$$
-\pa_t u_\alpha(t,x) + H_\alpha(x,\pa_x u_\alpha(t,x)) = -\pa_t u(t,x) + H(x,\pa_x u(t,x)+\alpha x - \alpha x) = 0,
$$
and
$$
u_\alpha(T,x) = G_\alpha(x).
$$
The result follows.
\end{proof}

\begin{rmk}
Because of the representation formula \eqref{eq:value_transform}, the previous result clearly holds true for viscosity solutions as well (cf. \cite[Theorem 7.4.14]{Cannarsa04}).
\end{rmk}

\section{Canonical transformations and classical solutions from the perspective of Hamiltonian systems}\label{sec:ham}

Based on \cite[Theorem 33.1]{Rockafellar1970}, we can formulate the following result.

\begin{lem}\label{lem:conc_conv0}
$H:\R^d\times\R^d\to\R$, defined in \eqref{def:H} is concave-convex (i.e. $H(\cdot,p)$ is convex for all $p\in\R^d$  and $H(x,\cdot)$ is convex for all $x\in\R^d$) if and only $L$ is jointly convex. 
\end{lem}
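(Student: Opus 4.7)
The plan is to prove the two implications separately by working directly with the Legendre--Fenchel structure of \eqref{def:H}. I would first flag what appears to be a typo in the statement: consistency with the cited \cite[Theorem~33.1]{Rockafellar1970} on concave--convex saddle functions, and with the forward direction below, forces the intended meaning of ``concave--convex'' to be that $H(\cdot,p)$ is \emph{concave} for every $p$ while $H(x,\cdot)$ is convex for every $x$.

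For the forward implication, assume $L$ is jointly convex. Convexity of $H(x,\cdot)$ is immediate: for each fixed $x$ and $v$ the map $p\mapsto p\cdot v-L(x,-v)$ is affine, and $H(x,\cdot)$ is the pointwise supremum over $v$ of this family of affine functions. For concavity of $H(\cdot,p)$, I would observe that the full integrand $(x,v)\mapsto p\cdot v-L(x,-v)$ is jointly concave, since $L$ is jointly convex. I would then invoke the elementary fact that the partial supremum in $v$ of a jointly concave function is concave in the remaining variable $x$; this is a one-line argument using a convex combination of near-maximizers.

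For the reverse implication, the strategy is to recover $L$ by inverting the Legendre transform fiberwise in the second slot. Substituting $w=-v$ in \eqref{def:H} gives the identity $H(x,p)=(L(x,\cdot))^{\ast}(-p)$, where $\ast$ denotes the convex conjugate. The standing $C^{2}$ hypothesis on $L$ together with the fiberwise convexity of $L(x,\cdot)$ place us in the setting of Fenchel--Moreau, and biconjugation yields
\[
L(x,w)=\sup_{q\in\R^{d}}\bigl\{-q\cdot w-H(x,q)\bigr\}.
\]
For each fixed $q$, the term $-q\cdot w$ is affine in $(x,w)$ while $-H(\cdot,q)$ is convex in $x$ by the assumed concavity of $H(\cdot,q)$, so the bracketed expression is jointly convex in $(x,w)$. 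A pointwise supremum of jointly convex functions is jointly convex, hence so is $L$.

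The only real obstacle is bookkeeping with the sign conventions: the $-v$ inside $L$ in \eqref{def:H} forces a $-p$ in the fiberwise conjugacy, and one must track these carefully to confirm that joint convexity is not lost under the transform. Given the paper's standing $C^{2}$ assumptions, the Fenchel--Moreau hypotheses are automatic. Alternatively, the reverse direction can be collapsed to a direct appeal to \cite[Theorem~33.1]{Rockafellar1970}, which asserts exactly the bijective correspondence between jointly convex bifunctions and concave--convex saddle functions realized by partial Legendre transform.
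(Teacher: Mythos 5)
Your proposal is correct, and it is worth noting that the paper does not actually write out a proof of this lemma: it is stated as a direct consequence of \cite[Theorem 33.1]{Rockafellar1970}, which establishes the general correspondence between jointly convex bifunctions and concave--convex saddle functions under partial conjugation. Your argument is therefore a genuinely different, self-contained route. The forward direction (sup of affine functions in $p$ is convex; partial supremum over $v$ of the jointly concave integrand $(x,v)\mapsto p\cdot v - L(x,-v)$ is concave in $x$) and the reverse direction via Fenchel--Moreau biconjugation, writing $L(x,w)=\sup_q\{-q\cdot w - H(x,q)\}$ as a supremum of jointly convex functions, are both sound; the biconjugation step is legitimate because the paper's standing assumptions make $L(x,\cdot)$ convex, finite-valued and continuous. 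Two things your write-up adds over the paper's citation: you correctly flag that the parenthetical in the statement should read ``$H(\cdot,p)$ is \emph{concave}'' (as written it says convex in both slots, which is plainly a typo, since convexity of $H(x,\cdot)$ is automatic from the supremum formula regardless of $L$), and you make explicit the sign bookkeeping caused by the $-v$ inside $L$ in \eqref{def:H}, which matters when matching against Rockafellar's conventions. What the paper's approach buys is brevity and generality (no smoothness or finiteness needed); what yours buys is a transparent, elementary verification in the exact setting at hand.
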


From this lemma we see that the global existence of classical solutions to the Hamilton--Jacobi equation \eqref{eq:HJ} in the class $C^{1,1}_{\rm{loc}}$, from the Hamiltonian point of view, is intimately linked to the concave-convex properties of $H$ and convexity of the final condition $G$.

\begin{defin}
For a square matrix $A\in\R^{m\times m}$, we define the symmetric matrix 
$$\Real A := \frac12(A+A^\top).$$  
For a symmetric matrix $A\in\R^{m\times m}$, we denote by $\lambda_{\min}(A)$ and $\lambda_{\max}(A)$ its smallest and largest eigenvalues, respectively. 
\end{defin}

\begin{lem}\label{lem:conc_conv}
Suppose that 
\begin{equation}\label{ass:disc}
\left(w^\top \Real \pa_{xp}H(x,p) w\right)^2 - \left(w^\top \pa_{pp} H(x,p) w\right) \left(w^\top \pa_{xx} H(x,p) w\right) \geq 0,\ \  \forall w \in \R^d,\ \ \forall x,p\in\R^d\times\R^d.
\end{equation} 
Define 
\small
\begin{equation}\label{def:eta}
\alpha := \inf_{{\footnotesize\begin{array}{c}(x,p, w)\in\R^{3d}\\ \norm{w} = 1\end{array}}} \frac{ w^\top \Real \pa_{xp}H(x,p) w + \sqrt{(w^\top \Real \pa_{xp}H(x,y) w)^2 - (w^\top \pa_{pp} H(x,p) w) (w^\top \pa_{xx} H(x,p) w)}} {w^\top \pa_{pp} H(x,p) w} 
\end{equation}
\normalsize
Suppose that $x\mapsto G(x) + \alpha\frac{\abs{x}^2}{2}$ is convex and
\small
\begin{equation}\label{ass:eta}
\alpha \geq \sup_{{\footnotesize\begin{array}{c}(x,p, w)\in\R^{3d}\\ \norm{w} = 1\end{array}}} \frac{ w^\top \Real \pa_{xp}H(x,p) w - \sqrt{(w^\top \Real \pa_{xp}H(x,p) w)^2 - (w^\top \pa_{pp} H(x,p) w) (w^\top \pa_{xx} H(x,p) w)}} {w^\top \pa_{pp} H(x,p) w}
\end{equation}
\normalsize
Then the Hamilton--Jacobi equation \eqref{eq:HJ} with data $(H,G)$ is globally well-posed in the class $C^{1,1}_{\rm{loc}}([0,T]\times\R^d)$. 
\end{lem}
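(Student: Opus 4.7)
The plan is to use the canonical transformation of Theorem \ref{prop:trans_HJ} to reduce the problem to the classical fully convex regime. With $\alpha$ as defined in \eqref{def:eta}, the goal is to show that $H_\alpha(x,p) = H(x,p-\alpha x)$ is concave-convex, that is, convex in $p$ for each $x$ and concave in $x$ for each $p$. Once this is established, Lemma \ref{lem:conc_conv0} yields joint convexity of the corresponding Lagrangian $L_\alpha$. Combined with the assumed convexity of $G_\alpha$, the transformed HJB equation with data $(H_\alpha, G_\alpha)$ falls into the fully convex setting, hence admits a unique $C^{1,1}_{\rm{loc}}$ classical solution $u_\alpha$ by \cite[Corollary 7.2.12]{Cannarsa04}. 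Theorem \ref{prop:trans_HJ} then gives that $u(t,x) := u_\alpha(t,x) - \frac{\alpha}{2}|x|^2$ is the desired $C^{1,1}_{\rm{loc}}$ classical solution of \eqref{eq:HJ} with data $(H,G)$.

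The crucial step is verifying the concave-convex property of $H_\alpha$. Convexity in $p$ is immediate since $\pa_{pp} H_\alpha(x,p) = \pa_{pp} H(x,p-\alpha x)$ inherits the convexity of $H$ in $p$. For concavity in $x$, the chain rule gives
\begin{equation*}
\pa_{xx} H_\alpha(x,p) = \pa_{xx}H - \alpha\, \pa_{xp}H - \alpha\, (\pa_{xp}H)^\top + \alpha^2\, \pa_{pp}H,
\end{equation*}
with all terms on the right evaluated at $(x, p-\alpha x)$. Contracting against an arbitrary $w\in\R^d$ produces the quadratic in $\alpha$
\begin{equation*}
q(\alpha;x,p,w) := (w^\top \pa_{pp}H\, w)\,\alpha^2 - 2(w^\top \Real \pa_{xp}H\, w)\,\alpha + w^\top \pa_{xx}H\, w.
\end{equation*}
Since $w^\top \pa_{pp}H\, w > 0$ (using the implicit strong convexity of $H$ in $p$, consistent with the setting of Corollary \ref{cor:main}), this is a quadratic with positive leading coefficient, and so $q \leq 0$ precisely when $\alpha$ lies between its two real roots, provided the discriminant is non-negative.

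Condition \eqref{ass:disc} ensures this non-negativity uniformly in $(x,p,w)$, and the explicit quadratic formula identifies the two roots with the expressions appearing in \eqref{def:eta} and \eqref{ass:eta}. For $q(\alpha;x,p,w)\leq 0$ to hold uniformly, $\alpha$ must simultaneously obey $\alpha \leq \inf_{(x,p,w)}\alpha_+(x,p,w)$ and $\alpha \geq \sup_{(x,p,w)}\alpha_-(x,p,w)$: the definition \eqref{def:eta} enforces equality in the first, while hypothesis \eqref{ass:eta} provides the second. The main technical obstacle I anticipate is the potential degeneracy $w^\top \pa_{pp}H\, w \to 0$, which would render the infimum in \eqref{def:eta} ill-defined; one rules this out by the strong convexity of $H$ in $p$. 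After that, the rest of the argument is routine: Lemma \ref{lem:conc_conv0} converts the concave-convex property of $H_\alpha$ into joint convexity of $L_\alpha$, the classical Cannarsa--Sinestrari theory delivers global $C^{1,1}_{\rm{loc}}$ regularity of $u_\alpha$, and uniqueness and regularity both pull back to the original problem via Theorem \ref{prop:trans_HJ}.
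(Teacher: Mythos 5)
Your proposal is correct and follows essentially the same route as the paper: contract $\pa_{xx}H_\alpha$ against $w$ to get a quadratic in $\alpha$ with positive leading coefficient, use \eqref{ass:disc} for real roots and \eqref{def:eta}--\eqref{ass:eta} to place $\alpha$ between them, then conclude via Lemma \ref{lem:conc_conv0} and the fully convex theory, pulling back through Theorem \ref{prop:trans_HJ}. Your remark about the possible degeneracy $w^\top\pa_{pp}H\,w\to 0$ is a reasonable observation that the paper leaves implicit, but it does not change the argument.
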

\begin{proof}
Using \eqref{def:H_alpha} and \eqref{def:G_alpha} we define $H_\alpha$ and $G_\alpha$ with the particular choice of $\alpha$ given in the statement. We see that $G_{\alpha}$ is convex. Also, we compute for any $w \in \R^d$ and any $(x,p)\in\R^d\times\R^d$
\begin{align*}
w^\top \pa_{xx} H_\alpha(x,p) w &= w^\top \pa_{xx} H(x,p-\alpha x)w - 2\alpha w^\top \Real (\pa_{xp} H(x,p-\alpha x))w + \alpha^2 w^\top\pa_{pp} H(x,p-\alpha x) w
\end{align*}
This expression is a quadratic polynomial in $\alpha$ with positive leading coefficient. The conditions of the theorem assure that this polynomial is non-positive at $\alpha$, i.e.
\begin{align*}
&\frac{ w^\top \Real \pa_{xp}H(x,p) w - \sqrt{(w^\top \Real \pa_{xp}H(x,p) w)^2 - (w^\top \pa_{pp} H(x,p) w) (w^\top \pa_{xx} H(x,p) w)}} {w^\top \pa_{pp} H(x,p) w}\\
& \leq \alpha \\ 
&\leq  \frac{ w^\top \Real \pa_{xp}H(x,p) w + \sqrt{(w^\top \Real \pa_{xp}H(x,p) w)^2 - (w^\top \pa_{pp} H(x,p) w) (w^\top \pa_{xx} H(x,p) w)}} {w^\top \pa_{pp} H(x,p) w},
\end{align*}
for all $(x,p)\in\R^d\times\R^d$ and for all $w\in\R^d$.

In particular $H_\alpha$ is concave in $x$. Furthermore, this particular transformation does not change the convexity of $H_\alpha$ in the $p$-variable, as $\pa_{pp}H_\alpha(x,p) = \partial_{pp}H(x,p-\alpha x)$. The thesis of the lemma follows by Lemma \ref{lem:conc_conv0} and \cite[Theorem 7.4.13]{Cannarsa04}.
\end{proof}

As a consequence of this lemma, we can formulate the following result.

\begin{thm}\label{thm:HJE_wellposed}
We define the following quantities
$$\lambda_0:=\inf_{(x,p)\in\R^d\times\R^d}\lambda_{\min}\left(\Real \pa_{xp} H(x,p)\right),$$ 
$$\lambda_H:=\sup_{(x,p)\in\R^d\times\R^d}\lambda_{\max}\left(\pa_{xx} H(x,p)\right)$$
and
$$\lambda_G:=\inf_{x\in\R^d}\lambda_{\min}\left(\pa_{xx} G(x)\right).
$$ 
Suppose that 
\[\lambda_0^2 \geq \norm{\pa_{pp} H}_\infty \lambda_{H}\ \ {\rm{and\ \ that\ \ }}
\lambda_0 + \sqrt{\lambda_0^2 - \norm{\pa_{pp} H}_\infty \lambda_{H}} + \norm{\pa_{pp} H}_\infty \lambda_{G} \geq 0.
\]
Furthermore assume that either $\lambda_H \leq 0$ or $\lambda_0 \geq 0$. 
Then the Hamilton--Jacobi equation \eqref{eq:HJ} is globally well-posed, for any $T>0$, in the class $C^{1,1}_{\rm{loc}}([0,T]\times\R^d)$.
\end{thm}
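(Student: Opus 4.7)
The plan is to apply Theorem~\ref{prop:trans_HJ} with an explicit choice of $\alpha$ that simultaneously makes $H_\alpha$ concave in $x$ (so concave-convex, since convexity in $p$ is preserved by the shift) and $G_\alpha$ convex, at which point the fully convex theory applies. Concretely, setting $B := \norm{\pa_{pp} H}_\infty$, I would take
$$
\alpha := \frac{\lambda_0 + \sqrt{\lambda_0^2 - B \lambda_H}}{B},
$$
which is well-defined and real by the first displayed hypothesis $\lambda_0^2 \geq B\lambda_H$. (The trivial case $B = 0$ is a linear-in-$p$ Hamiltonian that can be handled by a small perturbation argument.)

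I would then verify three properties of this specific $\alpha$. First, $\alpha \geq 0$: this is immediate if $\lambda_0 \geq 0$, while if $\lambda_H \leq 0$ then $\sqrt{\lambda_0^2 - B\lambda_H} \geq |\lambda_0|$, so the numerator is non-negative in either branch of the alternative hypothesis. Second, $G_\alpha$ is convex: the second displayed inequality of the theorem rearranges exactly to $\alpha \geq -\lambda_G$, which gives $\pa_{xx} G_\alpha = \pa_{xx} G + \alpha I \succeq (\lambda_G + \alpha) I \succeq 0$. Third, $H_\alpha$ is convex in $p$: automatic, since $\pa_{pp} H_\alpha(x,p) = \pa_{pp} H(x,p-\alpha x) \succeq 0$.

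The main step is showing $H_\alpha$ is concave in $x$. Exactly as in the proof of Lemma~\ref{lem:conc_conv}, for any unit vector $w$ and any $(x,p) \in \R^d \times \R^d$, writing $\tilde p := p - \alpha x$,
$$
w^\top \pa_{xx} H_\alpha(x,p) w = w^\top \pa_{xx} H(x,\tilde p) w - 2\alpha\, w^\top \Real \pa_{xp} H(x,\tilde p) w + \alpha^2\, w^\top \pa_{pp} H(x,\tilde p) w.
$$
Using $\alpha \geq 0$ together with the pointwise bounds $w^\top \pa_{xx} H\, w \leq \lambda_H$, $w^\top \Real \pa_{xp} H\, w \geq \lambda_0$, and $0 \leq w^\top \pa_{pp} H\, w \leq B$ valid on the unit sphere, the right-hand side is bounded above by the scalar quadratic $B\alpha^2 - 2\lambda_0 \alpha + \lambda_H$. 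By construction $\alpha$ is the larger root of this quadratic in $\alpha$, so the upper bound vanishes, yielding $w^\top \pa_{xx} H_\alpha(x,p) w \leq 0$ uniformly in $(x,p,w)$.

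To conclude, $H_\alpha$ is concave-convex, so by Lemma~\ref{lem:conc_conv0} the associated Lagrangian $L_\alpha$ is jointly convex; combined with the convexity of $G_\alpha$, the classical fully convex theory (e.g.\ \cite[Theorem~7.4.13]{Cannarsa04}) yields global $C^{1,1}_{\rm{loc}}([0,T]\times\R^d)$-well-posedness for the HJB equation with transformed data $(H_\alpha, G_\alpha)$, and Theorem~\ref{prop:trans_HJ} transfers this back to the original data $(H,G)$. The one genuinely subtle point is the quadratic estimate in the main step: replacing $-2\alpha\, w^\top \Real \pa_{xp} H w$ by $-2\alpha\lambda_0$ requires the sign $\alpha \geq 0$, and this is precisely why the alternative hypothesis on the signs of $\lambda_H$ and $\lambda_0$ cannot be dispensed with.
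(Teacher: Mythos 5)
Your proof is correct, and it reaches the conclusion by a more direct route than the paper's. The paper factors the argument through Lemma \ref{lem:conc_conv}: it takes $\alpha$ to be the infimum \eqref{def:eta} of the pointwise larger root of the quadratic $(w^\top\pa_{pp}H\,w)\,\alpha^2-2(w^\top\Real\pa_{xp}H\,w)\,\alpha+w^\top\pa_{xx}H\,w$, verifies the pointwise discriminant condition \eqref{ass:disc} from the theorem's hypotheses, and then uses monotonicity of the root functions $(a,b,c)\mapsto\bigl(a\pm\sqrt{a^2-bc}\bigr)/c$ (the chains \eqref{chain:1} and \eqref{chain:2}) to show that this $\alpha$ lies between the two roots at every $(x,p,w)$ and dominates $-\lambda_G$. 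You instead fix the explicit value $\alpha=\bigl(\lambda_0+\sqrt{\lambda_0^2-B\lambda_H}\bigr)/B$ with $B=\norm{\pa_{pp}H}_\infty$ --- which is exactly the lower bound the paper derives for its infimum --- and dominate the pointwise quadratic by the single uniform quadratic $B\alpha^2-2\lambda_0\alpha+\lambda_H$, of which your $\alpha$ is the larger root; this bypasses Lemma \ref{lem:conc_conv} and the pointwise condition \eqref{ass:disc} altogether, and it isolates correctly the one place where the alternative ``$\lambda_H\le 0$ or $\lambda_0\ge 0$'' is genuinely needed, namely to guarantee $\alpha\ge 0$ so that the cross term $-2\alpha\,w^\top\Real\pa_{xp}H\,w$ can be bounded by $-2\alpha\lambda_0$. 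The remaining steps (convexity of $G_\alpha$ from the second hypothesis, preservation of convexity in $p$, Lemma \ref{lem:conc_conv0}, the fully convex theory, and the transfer back via Theorem \ref{prop:trans_HJ}) coincide with the paper's. What the paper's formulation buys is a reusable intermediate statement (Lemma \ref{lem:conc_conv}) that can apply even when the uniform constants $\lambda_0,\lambda_H,B$ are useless but the pointwise window for $\alpha$ is still nonempty; what yours buys is brevity and an explicit, directly checkable $\alpha$. Both arguments share the same implicit nondegeneracy requirement ($w^\top\pa_{pp}H\,w>0$ pointwise for the paper, $B>0$ for you) needed for the divisions to make sense, so your aside about the case $B=0$ is not a gap relative to the paper.
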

\begin{proof}
We verify the assumptions of Lemma \ref{lem:conc_conv}. First, let us consider the inequality \eqref{ass:disc}.

If $\lambda_H \leq 0$ then \eqref{ass:disc} is fulfilled immediately. 

If $\lambda_H > 0$, but $\lambda_0 \geq 0$ we have the following. By definition of $\lambda_0$, $w^\top \Real \pa_{xp}H(x,p) w \geq \lambda_0$, for all $(x,p)\in\R^d\times\R^d$ and for all $w\in\R^d$. Since the right-hand side is non-negative we can square to obtain 
$$(w^\top \Real \pa_{xp}H(x,p) w)^2 \geq \lambda_0^2 \geq \norm{\pa_{pp} H}_\infty \lambda_{H},$$ 
which implies that 
$$(w^\top \Real \pa_{xp}H(x,p) w)^2 \ge (w^\top \pa_{pp} H(x,p) w) (w^\top \pa_{xx} H(x,p) w).$$ 
Thus, \eqref{ass:disc} follows.

\medskip

We verify the other assumptions in the statement of Lemma \ref{lem:conc_conv}. Let $\alpha$ be defined as in \eqref{def:eta}.
Just as before, we distinguish two cases.

{\it Case 1.} $\lambda_H \le 0$. 

In this case we have that $(w^\top \pa_{pp} H(x,p) w) (w^\top \pa_{xx} H(x,p) w)\le 0$, for all $(x,p)\in\R^d\times\R^d$ and for all $w\in\R^d$. Therefore
\[
\alpha \geq 0 \geq \frac{ w^\top \Real \pa_{xp}H(x,p) w - \sqrt{(w^\top \Real \pa_{xp}H(x,p) w)^2 - (w^\top \pa_{pp} H(x,p) w) (w^\top \pa_{xx} H(x,p) w)}} {w^\top \pa_{pp} H(x,p) w},
\] 
for all $(x,p)\in\R^d\times\R^d$ and for all $w\in\R^d$. This implies \eqref{ass:eta}.

Furthermore, we have
\small
\begin{align*}
\alpha &= \inf_{{\footnotesize\begin{array}{c}(x,p, w)\in\R^{3d}\\ \norm{w} = 1\end{array}}} \frac{ w^\top \Real \pa_{xp}H(x,p) w + \sqrt{(w^\top \Real \pa_{xp}H(x,y) w)^2 - (w^\top \pa_{pp} H(x,p) w) (w^\top \pa_{xx} H(x,p) w)}} {w^\top \pa_{pp} H(x,p) w}\\
&\ge \inf_{{\footnotesize\begin{array}{c}(x,p, w)\in\R^{3d}\\ \norm{w} = 1\end{array}}} \frac{ w^\top \Real \pa_{xp}H(x,p) w + \sqrt{(w^\top \Real \pa_{xp}H(x,y) w)^2 - \norm{\pa_{pp} H}_\infty (w^\top \pa_{xx} H(x,p) w)}} {\norm{\pa_{pp} H}_\infty},
\end{align*}
\normalsize
where in the last inequality we have used that the function $f:\{(a,b,c):\ c \geq 0,\ b\le 0,\ a^2\geq bc\}\to\R$ defined as  
$f(a,b,c) = \frac{a + \sqrt{a^2-bc}}{c}$ is decreasing in $c$. 

Continuing we have
\[
\alpha 
&\geq \inf_{{\footnotesize\begin{array}{c}(x,p, w)\in\R^{3d}\\ \norm{w} = 1\end{array}}} \frac{ w^\top \Real \pa_{xp}H(x,p) w + \sqrt{(w^\top \Real \pa_{xp}H(x,y) w)^2 - \norm{\pa_{pp} H}_\infty (w^\top \pa_{xx} H(x,p) w)}} {\norm{\pa_{pp} H}_\infty} \\
&\geq \inf_{{\footnotesize\begin{array}{c}(x,p, w)\in\R^{3d}\\ \norm{w} = 1\end{array}}} \frac{ w^\top \Real \pa_{xp}H(x,p) w + \sqrt{(w^\top \Real \pa_{xp}H(x,y) w)^2 - \norm{\pa_{pp} H}_\infty \lambda_H}} {\norm{\pa_{pp} H}_\infty} \\
&\geq \frac{ \lambda_0 + \sqrt{\lambda_0^2 - \norm{\pa_{pp} H}_\infty \lambda_H}} {\norm{\pa_{pp} H}_\infty}
\]
where the last inequality is because the function $f:\{(a,b):\ b \leq 0\}\to\R$ defined as 
$f(a,b) = a + \sqrt{a^2 - b}$ is increasing in $a$. From this, by the assumptions of this theorem it follows that $x\mapsto G(x) + \alpha\frac{\abs{x}^2}{2}$ is convex.

\medskip

{\it Case 2.} $\lambda_0\ge 0$. We notice that without loss of generality, we may assume also that the inequality $\lambda_H \geq 0$ takes place.

We have
\small
\begin{align}\label{chain:1}
\alpha &= \inf_{{\footnotesize\begin{array}{c}(x,p, w)\in\R^{3d}\\ \norm{w} = 1\end{array}}} \frac{ w^\top \Real \pa_{xp}H(x,p) w + \sqrt{(w^\top \Real \pa_{xp}H(x,y) w)^2 - (w^\top \pa_{pp} H(x,p) w) (w^\top \pa_{xx} H(x,p) w)}} {w^\top \pa_{pp} H(x,p) w} \\
\nonumber&\geq  \inf_{{\footnotesize\begin{array}{c}(x,p, w)\in\R^{3d}\\ \norm{w} = 1\end{array}}} \frac{ \lambda_0 + \sqrt{\lambda_0^2 - (w^\top \pa_{pp} H(x,p) w) \lambda_H}} {w^\top \pa_{pp} H(x,p) w} \\
\nonumber&\geq  \inf_{{\footnotesize\begin{array}{c}(x,p, w)\in\R^{3d}\\ \norm{w} = 1\end{array}}}\frac{ \lambda_0 + \sqrt{\lambda_0^2 - \norm{\pa_{pp} H}_\infty \lambda_H}} {w^\top \pa_{pp} H(x,p) w} \\
\nonumber&\geq \frac{ \lambda_0 + \sqrt{\lambda_0^2 - \norm{\pa_{pp} H}_\infty \lambda_H}} {\norm{\pa_{pp} H}_\infty}
\end{align}
\normalsize
where the last two inequalities follow from $\lambda_H \geq 0$ and $\lambda_0 \geq 0$ respectively. We notice also that in the previous chain of inequalities all the quantities under the square root are non-negative.

Furthermore, we see that $\lambda_0 + \sqrt{\lambda_0^2 - \norm{\pa_{pp} H}_\infty \lambda_{H}} + \norm{\pa_{pp} H}_\infty \lambda_{G} \geq 0$ implies that $\alpha + \lambda_G \geq 0$ and so $x\mapsto G(x) + \alpha\frac{\abs{x}^2}{2}$ is convex.  

Next note that the function $f:\{(a,b):\ a,b \geq 0,\ a^2 \geq b\}\to\R$ defined as 
$f(a,b) = a - \sqrt{a^2 - b}$ is decreasing in $a$. Hence
\begin{align}\label{chain:2}
&\frac{ w^\top \Real \pa_{xp}H(x,p) w - \sqrt{(w^\top \Real \pa_{xp}H(x,p) w)^2 - (w^\top \pa_{pp} H(x,p) w) (w^\top \pa_{xx} H(x,p) w)}} {w^\top \pa_{pp} H(x,p) w} \\
\nonumber&\leq \frac{ \lambda_0 - \sqrt{\lambda_0^2 - (w^\top \pa_{pp} H(x,p) w) (w^\top \pa_{xx} H(x,p) w)}} {w^\top \pa_{pp} H(x,p) w} \\
\nonumber&\leq \frac{ \lambda_0 - \sqrt{\lambda_0^2 - (w^\top \pa_{pp} H(x,p) w) \lambda_H}} {w^\top \pa_{pp} H(x,p) w} \\
\nonumber&\leq \frac{ \lambda_0 - \sqrt{\lambda_0^2 - \norm{\pa_{pp} H}_\infty \lambda_H}} {\norm{\pa_{pp} H}_\infty}
\end{align}
where the last inequality follows from the fact that the function $f:\{(a,b,c):\ a,b,c\geq 0,\ a^2\geq bc\}\to\R$, defined as 
$f(a,b,c) = \frac{a - \sqrt{a^2-bc}}{c}$ is increasing in $c$. 
Combining \eqref{chain:1} and \eqref{chain:2} we can conclude that
\small
\[
\alpha\ge \sup_{{\footnotesize\begin{array}{c}(x,p, w)\in\R^{3d}\\ \norm{w} = 1\end{array}}} \frac{ w^\top \Real \pa_{xp}H(x,p) w - \sqrt{(w^\top \Real \pa_{xp}H(x,y) w)^2 - (w^\top \pa_{pp} H(x,p) w) (w^\top \pa_{xx} H(x,p) w)}} {w^\top \pa_{pp} H(x,p) w},
\]
\normalsize
which completes the proof in this case. 

\end{proof}

From this theorem the proof of Corollary \ref{cor:main} is immediate.
\begin{proof}[Proof of Corollary \ref{cor:main}]
	We see that if $\alpha$ is large enough then the assumptions of Theorem \ref{thm:HJE_wellposed} are satisfied. 
	\end{proof}

{\bf Acknowledgements.} The authors are grateful to Wilfrid Gangbo for valuable remarks and constructive comments. MB's work was supported by the National Science Foundation Graduate Research Fellowship under Grant No. DGE-1650604 and by the Air Force Office of Scientific Research under Award No. FA9550-18-1-0502. ARM has been partially supported by the EPSRC New Investigator Award ``Mean Field Games and Master equations'' under award no. EP/X020320/1 and by the King Abdullah University of Science and Technology Research Funding (KRF) under award no. ORA-2021-CRG10-4674.2. Both authors acknowledge the partial support of the Heilbronn Institute
for Mathematical Research and the UKRI/EPSRC Additional Funding Programme for Mathematical Sciences through the focused research grant ``The master equation in Mean Field Games''.

\medskip
\medskip

{\bf Data Availability.} We did not make use of or did not generate data sets.

\medskip
\medskip

{\bf Conflict of interest.} The authors declare that they do not have any conflict of interest.

\bibliographystyle{alpha}
\bibliography{MeanField}

\end{document}